\newtheorem{theorem}{Theorem}[section]
\newtheorem{lemma}{Lemma}[section]
\numberwithin{equation}{section}
\newcommand{\D}{\displaystyle}
\newcommand{\DF}[2]{\frac{\D#1}{\D#2}}
\newcommand{\f}{\frac}
\newcommand{\ta}{\theta}
\newcommand{\ppp}{\partial^+ p}
\newcommand{\pmp}{\partial^-p}
\newcommand{\linv}{\lambda^{-1}}
\newcommand{\lp}{\Lambda_+}
\newcommand{\lm}{\Lambda_-}
\newcommand{\vep}{\varepsilon}
\newcommand{\tta}{\theta}
\begin{document}
\title[Sonic curves for 2-D Riemann problems of  nonlinear wave systems]
{The regularity of sonic curves for the two-dimensional Riemann problems of the
 nonlinear wave system of Chaplygin gas}

\author[Q. Wang and K. Song ]{Qin Wang \quad Kyungwoo Song${}^\dag$}

\address{Department of Mathematics, Yunnan University, Kunming, 650091, P.R.China.}
\address{Department of Mathematics and Research Institute for Basic Sciences,
Kyung Hee University, Seoul 130-701,  Korea,}

\thanks{${}^\dag$ Corresponding author}
\email{mathwq@ynu.edu.cn, kyusong@khu.ac.kr}
\keywords{Nonlinear wave system, Chaplygin gas, ,regularity, 2D Riemann problem,
bootstrap method.}
\date{}
\begin{abstract}
We study the regularity of sonic curves to a two-dimensional Riemann problem for the nonlinear wave system of Chaplygin gas, which is an essential step for the global existence of solutions to the two-dimensional Riemann problems. As a result, we establish the global existence of {\sl uniformly} smooth solutions in the
semi-hyperbolic patches up to the sonic boundary, where the degeneracy of hyperbolicity occurs. Furthermore, we show  the $C^1$ regularity of sonic curves.
\end{abstract}
\maketitle

%%%%%%%%%%%%%%%%%%%%%%%%%%%%%%%%%%%%%%%%%%%%%%%%%%%%%%%%%%%%%%%%%%%%%%%%%%%%%%%%%%%

\section{Introduction}
The purpose of this paper is to provide the regularity of sonic curves arising from a two-dimensional Riemann problem governed by a self-similar nonlinear wave equations.
Let density be $\rho$, velocity $(u,v)$,  the pressure $\tilde p$ be given as a function of $\rho$.
From the well-known 2-D compressible Euler system for isentropic flow,
when the flow is irrotational and the nonlinear velocity terms are ignored,  we can  derive the following nonlinear wave system
\begin{align}
\label{nws}
      \rho_t +  m_x +n_y &=0, \nonumber \\
               m_t + \tilde p_x & =0,  \\
               n_t + \tilde p_y & =0, \nonumber
\end{align}
where $(m,n)=(u\rho, v\rho)$ are momenta.
We refer the readers to \cite{cakeykim2, kim2, kim} for more information on this system.
In this paper we are interested in the 2-D Riemann problem of system \eqref{nws} with Chaplygin gas equation of state $\tilde p=-1/\rho$. The Chaplygin gas, which can be used to depict some dark-energy models in cosmology, has been widely studied. Recently, some interesting and important results have been obtained, especially for the Riemann problem. We refer to \cite{schen, chengyang, lai, serre, wangchenhu} and the references cited therein
for the related results.

For decades, there have been wide and intensive developments in the Riemann problems of conservation laws, in particular the 2-D compressible Euler system and  its simple models \cite{lzy, lzz}.
Such models are the unsteady transonic small disturbance (UTSD) equations \cite{hunter, cakey, cakeykim}, the pressure gradient system \cite{daizhang, ks, lei, sz, yzheng-cpde}, the potential flow \cite{cf1, cf2, elling, kim0}, and the nonlinear wave equations \cite{cakeykim2, kim2, kim, allen} and so on.
For Riemann problems in two-dimensional flow, those governing equations
become quasilinear and mixed types. The type of the flow in the far-field is hyperbolic, while near the origin  the type is mixed. The sonic lines, located between the elliptic part and the hyperbolic part, are important to obtain the global existence of  solutions to the mixed type problem. 

In this paper we consider the presence of the semi-hyperbolic patches with a transonic shock and a sonic curve in a simple wave environment.
The regions identified by the existence of a family of characteristics
that start on sonic curves and end on transonic shock curves are called
semi-hyperbolic patches. The study on this region was initiated in \cite{sz} via the pressure gradient system and is being continued
in \cite{my, qinyuxi, tianyuxi}. These regions are different from the typical fully
hyperbolic regions. As in the result of \cite{gjllzzz}, in several numerical calculations, we
frequently encounter this kind of regions in several configurations in the
two-dimensional Riemann problems of the Euler system as well as its simplified models (Figure 1).
One of the examples for semi-hyperbolic patches happens when air is accelerated
in a planar tunnel over a small inward bulge on one of the walls \cite{courant}.
\begin{figure}
\centering{\includegraphics[clip, width=9cm] {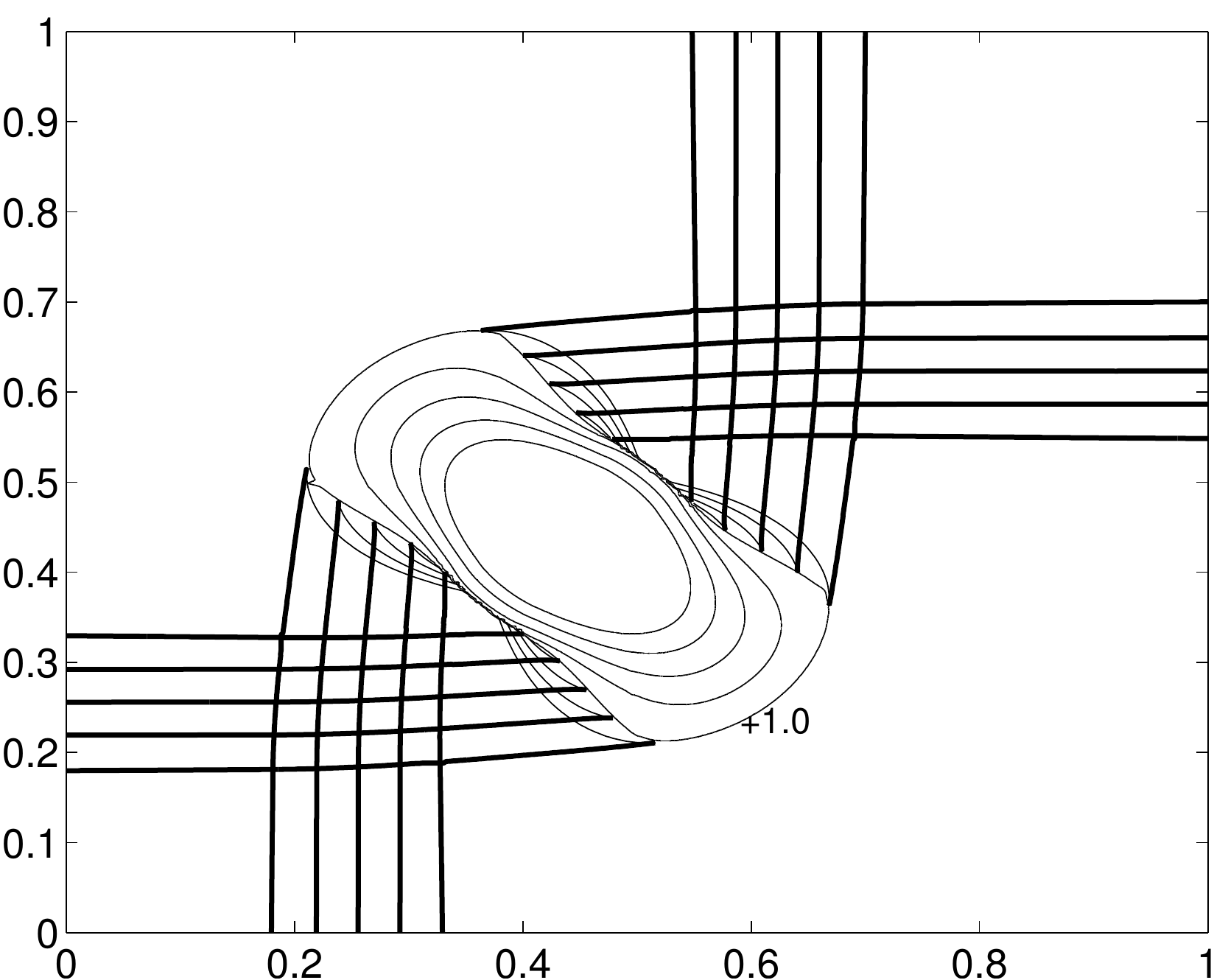} }
\parbox{13cm}{
\caption{\label{Fig1}Four semi-hyperbolic patches,
marked by the short and light curves and overlapped by some bold ones,
in the interaction of two forward and two backward rarefaction waves for
the Euler system with a gas constant $\gamma=1.4$. The five closed
curves at the center are contour curves of the pseudo-Mach number
which mark the subsonic region; bold curves originating from the boundaries
are characteristics; the short and light curves in the semi-hyperbolic
regions are also characteristics. The solution has two axes of symmetry
(Courtesy of Glimm et.~al.~\cite{gjllzzz}).} }
\end{figure}
A global solution to the boundary value problem of the Riemann problem
of a hyperbolic system  can be patched together by pieces along
characteristic lines, sonic curves, shock waves or other natural boundaries.
We believe that it is helpful and essential to understand
these patches of solutions locally before resolving the Riemann problems globally. 

In \cite{huwang}, Hu and Wang have studied  the semi-hyperbolic patches of solutions to the 2-D nonlinear wave system for Chaplygin gas state. They have constructed the global existence of solutions by solving a Goursat-type boundary value problem which has a sonic curve as a degenerate boundary, and verified that the sonic
curve is Lipschitz continuous. However, the smoothness of the sonic curve and the exact behavior of solutions
near the sonic curve  are still open. The main difficulty is  the degeneracy of hyperbolicity near the sonic lines.
In several numerical simulations, we can
see that the demarcation line between hyperbolic regions and elliptic regions is decomposed of sonic curves and
a transonic shock. Also the pressure seems to change smoothly across the sonic curves. Hence, we mainly
focus on the study of the solutions' precise behaviors near sonic curves, and expect better regularity of sonic curves. As a result, we prove that the semi-hyperbolic patches are uniformly smooth up to the sonic curves, and  the sonic curves are $C^1$ continuous.

In the self-similar coordinate $(\xi,\eta)=(x/t,y/t)$, the nonlinear wave system (\ref{nws}) becomes
\begin{align*}
      -\xi \tilde p_\xi - \eta \tilde p_\eta + \tilde p^2 m_\xi +\tilde p^2 n_\eta &=0, \\
       -\xi m_\xi - \eta m_\eta +  \tilde p_\xi  & =0, \\
       -\xi \eta_\xi - \eta n_\eta + \tilde p_\eta        & =0,
  \end{align*}
which yields a second order partial differential equation of $\tilde p$
\begin{align}
\label{eq:one-nonlinear}
  (\tilde p^2-\xi^2)\tilde p_{\xi\xi}-2\xi \eta \tilde p_{\xi\eta} +(\tilde p^2-\eta^2)\tilde p_{\eta \eta}
  + \f{2}{\tilde p}(\xi \tilde p_\xi+\eta \tilde p_\eta)^2
  -2(\xi \tilde p_\xi+\eta \tilde p_\eta)=0.
\end{align}
The two characteristics $\eta=\eta(\xi)$ are defined by
 \begin{align} 
 \label{cha_selfsimilar}
      \f{d\eta}{d\xi}=\f{\xi\eta\pm \sqrt{\tilde p^2(\xi^2+\eta^2-\tilde p^2)}}{\xi^2-\tilde p^2}
 \end{align}
 when $\xi^2+\eta^2>\tilde p^2$.
In the polar coordinate system $(r,\theta)=(\sqrt{\xi^2+\eta^2}, \arctan(\eta/\xi))$,
the equation (\ref{eq:one-nonlinear}) becomes
\begin{align}
\label{no-rta}
  (p^2-r^2)p_{rr}+\f{p^2}{r^2}p_{\ta\ta}+\f{p^2}{r}p_r +\f{2r^2}{p}p_r^2-2rp_r =0,
\end{align}
where $p(r,\theta)$ is a simple notation of  $\tilde p(r\cos\theta,r\sin\theta)$.
Equation (\ref{no-rta}) is elliptic
 in the region of $r^2-p^2<0$, and degenerates when $r+p=0$. In particular, in the area of $r^2-p^2>0$, it is hyperbolic, and
  two characteristics can be defined by
  \[  \f{dr}{d\tta}=\pm \linv =\pm \sqrt{\f{r^2(r^2-p^2)}{p^2}}. \]

In the hyperbolic region, equation (\ref{no-rta}) can be decoupled to
\begin{equation}
\label{decom}
  \partial^+\pmp=Q(\ppp - \pmp)\pmp, \quad \partial^-\ppp=Q(\pmp-\ppp)\ppp,
\end{equation}
where
\[ Q:= \f{r^2}{2p(r^2-p^2)}, \]
and  $\partial^\pm:= \partial_\ta \pm \linv \partial_r$ are  directional derivatives along the positive
and negative characteristics, respectively.
That is, we have a new system
 \begin{eqnarray}
  \label{matrix-eq}
   \left( \begin{array}{c}
     \ppp \\ \pmp \\ p
     \end{array} \right)_\theta
     +
   \left( \begin{array}{ccc}
        -\lambda^{-1} & 0 & 0 \\
        0 & \lambda^{-1} & 0  \\
        0 & 0 & 0
     \end{array} \right)
   \left( \begin{array}{c}
      \ppp \\ \pmp \\ p
     \end{array} \right)_r
    = \left( \begin{array}{c}
       Q(\pmp-\ppp)\ppp \\ Q(\ppp-\pmp)\pmp \\ \frac{1}{2}(\ppp+\pmp)
        \end{array} \right).
 \end{eqnarray}
Using this decomposition, the existence of solutions to (\ref{matrix-eq}) can be obtained in the hyperbolic region
even if we cannot get the Riemann invariants \cite{huwang, my, sz}.
Letting 
\[ R:=\ppp, \quad S:=\pmp, \]
 we have
 \[ R_\ta -\lambda^{-1}R_r = Q(S-R)R, \quad S_\ta +\lambda^{-1}S_r = Q(R-S)S. \]

 \begin{figure}
\centering{\includegraphics[clip, width=13cm]{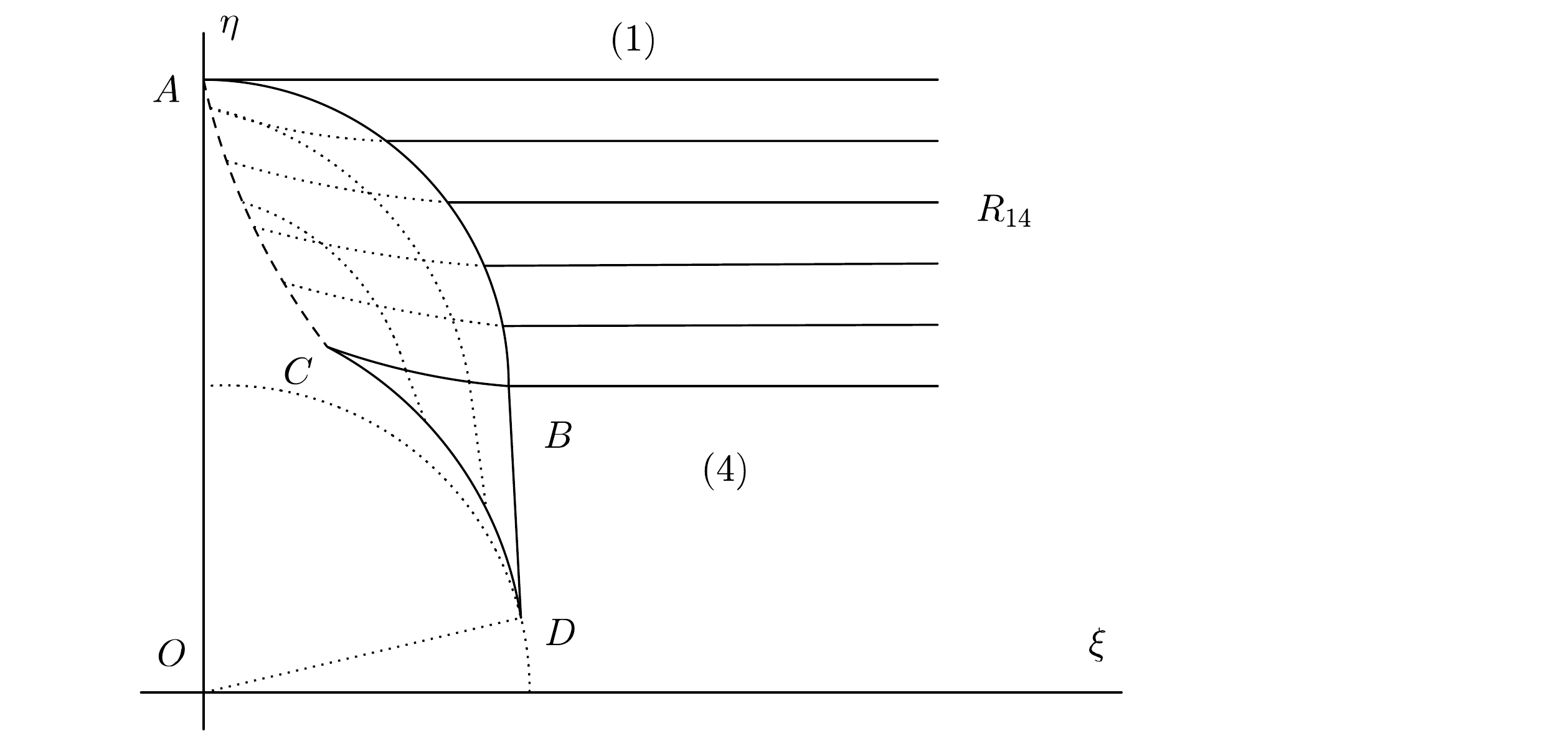}}
\parbox{14cm}{\caption{\label{Fig 2}
{{\bf The semi-hyperbolic region $ABC$}}.  Here $(1)$ and $(4)$ denote the constant states
of $p=p_1$ and $p=p_4$, respectively, and $R_{14}=-\eta$ is a planar wave. 
Arc $CD$ is an envelope and the region of $OACD$ is
a subsonic region. We are interested in the solutions in a curvilinear triangle $ABC$ and the regularity of
the sonic
curve $AC$ for the further research in the whole region of $OABD$.}  }
\end{figure}
At this point, let us give a short description of the problem and known results.  In the self-similar coordinates,
two constant states $(p_1, m_1, n_1)$ and $(p_4, m_4, n_4)$  are given with $p_1<p_4<0$, and a planar wave
\begin{equation*}
   R_{14}:  \left\{\begin{array}{ll}
       p(\xi, \eta)=-\eta, \\ [5pt]
      n(\xi, \eta)=\ln \f{p_4}{p(\xi, \eta)}, \\[5pt]
      m(\xi, \eta)=m_1=m_4=0,
   \end{array}\right.
\end{equation*}
connects these two constant states in the domain $\{\xi>0, \eta>0\}$. Let the point $(0, -p_1)$ be $A$, which is a sonic point  satisfying  $\xi^2+\eta^2=p^2$. We
draw a positive characteristic curve  defined by (\ref{cha_selfsimilar}) passing $A$ in the planar wave, and
intersecting the bottom boundary of the planar wave at $B$. Next we draw a negative characteristic
defined by (\ref{cha_selfsimilar}) from $B$ toward the subsonic region, and let $C$ be the ending point of the negative characteristic on the sonic curve.  
Upon this configuration in Figure 2, if we assume that the negative characteristic $BC$ is given, 
then the region $ABC$ bounded
by two characteristics $AB$ and $BC$ forms a 
semi-hyperbolic region, and  $AC$ is a sonic curve.
Here, the curve $BC$ serves as a spine to support the whole patch.

Let $W:=(R, S, p)$, and we also denote the arcs $AB$ and $BC$ by $\Gamma_+$ and $\Gamma_-$, respectively. For given boundary conditions on $\Gamma_\pm$, the global existence  and a priori $C^1$ estimate of $W$ in the domain
$ABC$ have been
obtained in \cite{huwang} as follows:
\begin{theorem}(\textbf{Global existence} \cite{huwang}).
 \label{knowntheorem}
   Assume that two constant states $(p_1, m_1, n_1)$ and $(p_4, m_4, n_4)$  are given with $p_1<p_4<0$ and a planar
   wave connects these two constant states in the domain $\xi>0$.
   For a given negative characteristic curve $\Gamma_-$, there exists a classical solution $W$ to the Goursat
   boundary value problem of (\ref{matrix-eq}) in the whole domain $ABC$ with a Lipschitz continuous sonic boundary $AC$
   if $p_4$ is close enough to $p_1$. Furthermore,  $\partial^+ p$ and $\partial^- p$ are strictly negative if the boundary data of
$\partial^+ p, \partial^- p$ on $\Gamma_\pm$ are strictly negative, and $\partial^+ p= \partial^- p$ on the sonic curve. And
$\partial^+ p, \partial^- p$ are uniformly bounded in the whole semi-hyperbolic region $ABC$.
\end{theorem}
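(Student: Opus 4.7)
The plan is to reformulate \eqref{matrix-eq} as a semilinear Goursat problem in characteristic coordinates and then continue the solution up to the degenerate boundary $AC$ by a bootstrap argument on the size of $W$. Concretely, I would introduce coordinates $(\alpha,\beta)$ in which $\beta$ labels the positive characteristics and $\alpha$ labels the negative ones, so that $\partial^+$ is proportional to $\partial_\alpha$ and $\partial^-$ to $\partial_\beta$. In these coordinates, \eqref{decom} takes the form
\[
R_\beta = J_-\,Q(S-R)R, \qquad S_\alpha = J_+\,Q(R-S)S,
\]
coupled to ODEs for $r,\ta,p$ along each characteristic family. In the strictly hyperbolic region $r^2-p^2>0$ this is a standard semilinear Goursat system, so Picard iteration produces a unique local $C^1$ solution with the prescribed data on $\Gamma_\pm$.

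Next I would close a uniform $C^1$ a priori estimate in order to extend the local solution to the whole patch. The sign property is immediate from the decoupled form: since the $R$-equation has right-hand side containing a factor $R$ (and likewise for $S$), an integrating-factor representation along a characteristic preserves the initial sign, so $R,S<0$ throughout $ABC$ whenever they are so on $\Gamma_\pm$. For the size, the crucial observation is the identity $R-S=2\linv p_r$, which shows that $R-S$ vanishes together with $\linv$ on the sonic set. Subtracting the two equations of \eqref{decom} yields an evolution inequality for $R-S$, and a Gronwall argument on the curvilinear quadrilateral swept out by characteristics through any interior point controls $\|R-S\|_{L^\infty(ABC)}$ by its boundary trace on $\Gamma_+\cup\Gamma_-$, which is small when $|p_4-p_1|$ is small. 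The singular coefficient $Q\sim(r+p)^{-1}$ therefore only enters through the combination $Q(R-S)$, which stays bounded; this is precisely what is needed to propagate the uniform bounds on $R$ and $S$ themselves all the way up to the sonic curve.

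With these uniform estimates, every negative characteristic issued from $\Gamma_+$ can be continued until it meets the set $\{r+p=0\}$, and the collection of its terminal points defines the sonic curve $AC$. Parametrizing $AC$ by $\ta$, its slope is uniformly bounded by the $C^1$ control of the characteristic speeds $\pm\linv$, giving Lipschitz regularity of $AC$. The identity $\ppp=\pmp$ on $AC$ is then a direct consequence of $R-S=2\linv p_r$: since $\linv\to 0$ while $p_r$ stays bounded by the a priori estimate, $R-S\to 0$ at the sonic boundary.

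The main obstacle is the simultaneous degeneracy $\linv\to 0$ and singularity $Q\to\infty$ at $AC$, which must cancel against the smallness of $R-S$ to keep every right-hand side bounded. Making this cancellation quantitative, with constants depending only on the boundary data and on $|p_4-p_1|$, is the technical heart of the argument and is what forces the smallness hypothesis on $p_4-p_1$; the remaining conclusions (existence on all of $ABC$, signs and boundedness of $R,S$, Lipschitz regularity of $AC$, and $\ppp=\pmp$ on $AC$) then follow from the uniform $(\alpha,\beta)$-coordinate estimates in a comparatively standard way.
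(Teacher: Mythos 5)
First, a point of order: the paper you are working from does not prove Theorem \ref{knowntheorem} at all --- it is quoted as a known result from Hu and Wang \cite{huwang}, so there is no internal proof to compare against. Your sketch does follow the general architecture one expects from that reference and its predecessors (characteristic coordinates, local Goursat solvability by Picard iteration, sign preservation of $R=\partial^+p$ and $S=\partial^-p$ along characteristics, continuation up to the degenerate boundary), but it has a genuine gap at exactly the step you identify as the technical heart.

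The gap is your assertion that $Q(R-S)$ stays bounded because $R-S$ ``vanishes together with $\lambda^{-1}$,'' and the ensuing Gronwall argument. Quantitatively, $Q=\frac{r^2}{2p(r-p)(r+p)}\sim (r+p)^{-1}$ while $R-S=2\lambda^{-1}p_r\sim\sqrt{r+p}\;p_r$, so $Q(R-S)\sim (r+p)^{-1/2}p_r$. Boundedness of this quantity requires control of $p_r$, equivalently the vanishing rate $R-S=O(\sqrt{r+p})$ --- but that rate is precisely the main open point that the present paper is written to establish (Lemma \ref{le3} and Theorem \ref{mainthm}); it is not available a priori at the level of Theorem \ref{knowntheorem}. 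A Gronwall inequality for $R-S$ whose coefficient blows up like $(r+p)^{-1}$ near $AC$ does not close. The same circularity reappears at the end of your sketch, where you deduce $\partial^+p=\partial^-p$ on $AC$ from ``$p_r$ stays bounded by the a priori estimate'': the estimate in Theorem \ref{knowntheorem} bounds $R$ and $S$, not $p_r=(R-S)/(2\lambda^{-1})$, which is a $0/0$ ratio on the sonic curve. The known route in \cite{huwang} (and the analogous works \cite{sz,my}) avoids integrating the singular coefficient altogether: uniform bounds and strict negativity of $R,S$ come from an invariant-region/maximum-principle argument exploiting the pointwise sign structure of the sources $Q(S-R)R$ and $Q(R-S)S$ (at a point where, say, $S$ first touches the boundary of the invariant interval, the source pushes it back in), with the smallness of $p_4-p_1$ used to control the geometry of the patch and the boundary data. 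You would need to replace your Gronwall step by an argument of that type for the proof to go through.
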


In fact, we do not know exactly how far we can extend the $C^1$ solution in Theorem \ref{knowntheorem} 
towards the sonic boundary. 
 In this paper, we improve the results of \cite{huwang} in the sense that the solutions in the semi-hyperbolic
 patches are {\sl uniformly} smooth up to the sonic curve, where the degeneracy of hyperbolicity occurs, and that the sonic curve is $C^1$ continuous. In fact, the smoothness of sonic curve is a necessary requirement for the
 global existence of solutions in the region of $OABD$ in Figure 2 for the future research.
 The main theorem is as follows.

 \begin{theorem}(\textbf{Regularity}) \label{mainthm}
    For the Goursat problem with prescribed characteristic boundaries $\Gamma_+$ and $\Gamma_-$, there exists a
    global uniformly smooth solution  of (\ref{matrix-eq}) in the domain $ABC$ under the same conditions as
   in Theorem \ref{knowntheorem}. Moreover, the sonic line $AC$ is $C^1$ continuous, and  the two derivatives $\partial^+p$ and  $\partial^-p$ approach a common value on the sonic curve with a rate of
   $O(\sqrt{r+p})$.
 \end{theorem}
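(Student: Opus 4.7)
The plan is a bootstrap argument built around a normalization of the unknowns that desingularizes the equations near the sonic curve. Writing $\delta:=r+p$ for the signed distance to the sonic line (recall $p<0$ and $r=-p$ on $AC$), Theorem~\ref{knowntheorem} already furnishes $W=(R,S,p)\in C^1$ in the open patch $ABC$ with $R=S$ on $AC$; the task is to upgrade this to uniform smoothness up to the boundary and to read off the regularity of $AC$ itself. The structural difficulty is that in (\ref{decom}) the coefficient $Q=r^2/(2p(r^2-p^2))$ blows up like $\delta^{-1}$, while the characteristic speed $\linv=\sqrt{r^2(r^2-p^2)/p^2}$ vanishes like $\sqrt{2|p|\delta}$. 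The sources $Q(S-R)R$ and $Q(R-S)S$ are therefore only integrable along characteristics once one knows $R-S=O(\sqrt{\delta})$, which is precisely the asserted convergence rate for $\ppp-\pmp$.

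The first step is to prove this sharp decay. Since $\partial^\pm r=\pm\linv$ and $\partial^\pm p$ equals $R$ or $S$, the distance $\delta$ satisfies simple transport identities that control how quickly it decays along characteristics approaching $AC$. Setting $T:=(R-S)/\sqrt{\delta}$, a direct computation using (\ref{decom}) together with the identities $R+S=2p_\theta$ and $R-S=2\linv p_r$ should produce ODEs of the form $\partial^\pm T=a(r,\theta)T+b(r,\theta)$ along the characteristic leaves, with coefficients $a,b$ uniformly bounded up to $AC$ because the singular factor $Q$ pairs cleanly with $\linv$ and with the vanishing of $R-S$ itself. Integrating these ODEs from the boundary data on $\Gamma_\pm$, where $|R-S|$ is a priori bounded, yields $|R-S|\leq C\sqrt{\delta}$ throughout $ABC$, giving the rate $O(\sqrt{r+p})$.

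With $|R-S|\lesssim\sqrt{\delta}$ secured, the right-hand sides in (\ref{decom}) are $O(\delta^{-1/2})$, which is integrable along the finite-length characteristic arcs reaching $AC$; this already produces a uniform $C^1$ extension of $(R,S,p)$ to the closed region $\overline{ABC}$. Bootstrapping, one differentiates (\ref{decom}) to obtain an analogous decoupled system for $(R_r,R_\theta,S_r,S_\theta)$ and repeats the same normalization trick: each new derivative introduces a potentially more singular source, but the structural scalings $\linv\sim\sqrt{\delta}$ and $R-S=O(\sqrt{\delta})$ ensure that, after normalizing by the correct half-integer power of $\delta$, the characteristic ODEs remain regular. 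Iterating this argument yields uniform $C^k$ bounds on $W$ up to $\overline{ABC}$ for every $k$, establishing the uniform smoothness claim.

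Finally, to obtain the $C^1$ regularity of $AC$, I parameterize the sonic curve by $\theta\mapsto r(\theta)$ through the defining relation $r+p(r,\theta)=0$. Implicit differentiation gives $r'(\theta)=-p_\theta/(1+p_r)$, where $p_\theta=(R+S)/2$ and $p_r=(R-S)/(2\linv)$. The previous step shows $p_\theta$ extends continuously to $AC$, and the $O(\sqrt{\delta})$ rate combined with $\linv\sim\sqrt{2|p|\delta}$ forces the ratio $p_r=(R-S)/(2\linv)$ to extend continuously as well; the strict negativity of $\ppp,\pmp$ in Theorem~\ref{knowntheorem} should keep $1+p_r$ bounded away from zero on $AC$, so the implicit function theorem delivers a $C^1$ parameterization of the sonic curve. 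The main obstacle throughout is the initial algebraic identification of a normalized variable such as $T=(R-S)/\sqrt{\delta}$ whose evolution is genuinely regular up to $AC$, and the analogous normalized variables at the higher-derivative stages; once this cancellation structure between the singular factor $Q$ and the vanishing factor $\linv$ is exposed, the remaining ODE-along-characteristics estimates are classical.
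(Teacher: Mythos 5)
Your overall strategy---normalize $R-S$ by $\sqrt{r+p}$, prove the $O(\sqrt{r+p})$ decay first, then feed it back into the characteristic equations---matches the spirit of the paper, but the key first step does not close as you describe it. You propose to derive ODEs $\partial^\pm T=aT+b$ with bounded coefficients for $T:=(R-S)/\sqrt{r+p}$ ``directly from (\ref{decom}).'' The decomposition (\ref{decom}) only provides the \emph{cross} derivatives $\partial^-R=Q(S-R)R$ and $\partial^+S=Q(R-S)S$; it says nothing about $\partial^+R=\partial^+\partial^+p$ or $\partial^-S=\partial^-\partial^-p$, which are genuinely new second derivatives of $p$ not controlled by the first-order system. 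Consequently neither $\partial^+(R-S)$ nor $\partial^-(R-S)$ can be computed from (\ref{decom}) alone: any such equation necessarily contains terms proportional to $\lambda^{-1}R_r$ or $\lambda^{-1}S_r$, and near the sonic curve one has no a priori bound on $R_r,S_r$ (only that $R,S$ themselves are bounded, from Theorem \ref{knowntheorem}). This is exactly the obstruction the paper is built to overcome: it introduces the coordinates $(r,t)$ with $t=\sqrt{r+p}$, the commutator quantities $G=\partial_+R-\partial_-R$ and $H=\partial_+S-\partial_-S$ (which \emph{are} expressible as $(\Lambda_+-\Lambda_-)R_r$, etc.), and proves the weighted derivative bounds $|t^{\delta}R_r|,|t^{\delta}S_r|\le M$ for $\delta\in(1,2)$ by a bootstrap on $G,H$ (Lemma \ref{lemma:3.1}). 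Only with these bounds in hand does the equation for $V=1/S-1/R$ become a regular ODE in $t$ with source $O(t^{2-\delta})$, yielding $|V|\le \widehat M t$ and hence the rate $O(\sqrt{r+p})$ for $\partial^+p-\partial^-p$. Your proposal skips precisely this layer, and the claimed boundedness of your coefficients $a,b$ is where the argument would fail.

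Two smaller points. First, your bootstrap to ``uniform $C^k$ bounds for every $k$'' is asserted, not argued, and overshoots what is needed (and what the paper proves): the conclusion is uniform continuity of $R$, $S$ and $(R-S)/t$ up to $AC$, i.e.\ a uniformly $C^1$ extension of $p$, not $C^\infty$ regularity at the degenerate boundary. Second, for the sonic curve you solve $r+p(r,\theta)=0$ for $r=r(\theta)$, which requires $1+p_r\neq 0$ on $AC$; this is not established by the strict negativity of $\partial^\pm p$ (the boundary value of $p_r=(R-S)/(2\lambda^{-1})$ is a finite limit of a $0/0$ expression and could a priori equal $-1$). The paper instead writes the level curves as $\theta=\theta_\vep(r)$ with $\theta_\vep'(r)=-(1+p_r)/p_\theta$, which only needs $p_\theta=(R+S)/2$ bounded away from zero---a fact that \emph{is} guaranteed by Theorem \ref{knowntheorem}. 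You should adopt that parameterization, or else supply an argument that $1+p_r$ does not vanish on $AC$.
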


 To overcome the difficulty of degeneracy, we will employ a new different coordinate system  $(r,t)$ with
$ t=\sqrt{r+p(r,\ta)}$ in Section 2. Under the new coordinates, we derive new systems for $(R, S)$.
The proof of the main theorem is based heavily on the uniform bounds of $t^\delta R_r$ and $t^\delta S_r$
for any $1<\delta<2$.
In the following section, we will prove the above main theorem using the bootstrap method to get the uniform
estimates as in \cite{lei, qinyuxi}.  Especially 
upon the method of \cite{qinyuxi}, we show that a global smooth solution in \cite{huwang} is extended  up to the 
sonic boundary by obtaining the convergence rate of $\partial^+p$ and $\partial^-p$ near the sonic curve. 
With those estimates and the convergence rate, we are able to prove $\partial^+p$ and $\partial^-p$ are uniformly
continuous on the sonic curve, and furthermore, the sonic curve is $C^1$ continuous.

For recent related progresses on the classical solution near the sonic lines for 2-D the pressure gradient systems and Euler systems, we  would like to mention the work of Zhang and Zheng \cite{tianyuxi} first. In \cite{tianyuxi}, for any given smooth curve as a sonic line with any specified non-tangential derivative of
the speed of sound, they have constructed classical solutions from the sonic line towards the
supersonic side by establishing a convergent iteration scheme. On the contrary, in \cite{qinyuxi} Q. Wang and Y. Zheng solved a Goursat problem and considered the regularity of solutions from the supersonic domain
towards the sonic curve. They considered the problem from a different aspect but also hoped it
can be used to build a transonic solution that bridges between a semi-hyperbolic patch to
an elliptic domain. For the two-dimensional Euler system, M. Li and Y. Zheng \cite{my}
studied the semi-hyperbolic patches of solutions. In a forthcoming paper \cite{swz}, we will consider the regularity of sonic curves for the Euler system.

\section{Main Results}
\subsection{Derivation of New Systems}
We introduce the new coordinates $(r,t)$, where
\[ t=\sqrt{r+p(r,\ta)}\]
and $r$ is a radius-component in the polar coordinates.
Then $\{t=0\}$ flattens the sonic boundary, and
 \[ Q= \f{-r^2}{2t^2(r-t^2)(2r-t^2)}, \quad \linv=\f{rt\sqrt{2r-t^2}}{r-t^2}. \]
We note that $\linv$ goes to zero as $t \to 0$.

The sonic line is defined by the limit of the following level curves
$\{(r,\theta): \; r+p(r,\theta)= \vep \}$.
Let such level curves be $\theta=\theta_\vep(r)$. Then
\[ \theta'_\vep (r)= - \f{p_r + 1}{p_\theta}. \]
Here, $p_\theta$ is negative and bounded in the whole domain. However,
\[ p_r = \f{R-S}{2\linv}, \]
where $R-S$ and $\linv$ vanish simultaneously as $t \to 0$. It seems necessary to know the rate of vanishing
of these two components, more precisely the vanishing rate of $R-S$ with respect to $t$, in order to know the regularity of the sonic line.

In the coordinate system $(r, t)$, we have
\begin{equation}
 \label{eq:RS}
   \left\{\begin{array}{ll}
     \displaystyle  R_t -  \f{2t\linv}{S-\linv}R_r = \f{2t}{S-\linv} Q(S-R)R, \\ [5pt]
     \displaystyle  S_t +  \f{2t\linv}{R+\linv}S_r = \f{2t}{R+\linv} Q(R-S)S.
   \end{array}\right.
\end{equation}
Since $\linv$ goes to zero as $t \to 0$,
 we note that $S-\linv \ne 0$ and $R+\linv \ne 0$ in the $(r, t)$-plane when $t$ is sufficiently
small. Letting 
\[ \lp:=\f{2t \linv}{R+\linv}, \quad \lm:=-\f{2t \linv}{S-\linv},   \]
 we have
\begin{align*}
   \left(\f{1}{R}\right)_t & =\f{2t^2Q}{1-S^{-1}\linv}\left(\f{1}{S}-\f{1}{R}\right)\f{1}{t}
      +\f{2t\linv S^{-1}}{1-S^{-1}\linv}\left(\f{1}{R}\right)_r, \\
  \left(\f{1}{S}\right)_t & =\f{2t^2Q}{1+R^{-1}\linv}\left(\f{1}{R}-\f{1}{S}\right)\f{1}{t}
      -\f{2t\linv R^{-1}}{1+R^{-1}\linv}\left(\f{1}{S}\right)_r.
\end{align*}
Defining
\[  U:= \f{1}{R}+\f{1}{S}, \quad V:= \f{1}{S}- \f{1}{R}, \]
where $V=0$ on the sonic line,
 we have
\begin{align*}
   U_t = \f{2tQUV \linv}{(1-S^{-1}\linv)(1+R^{-1}\linv)}
            + \f{2t\linv S^{-1}}{1-S^{-1}\linv}\left(\f{1}{R}\right)_r
            - \f{2t \linv R^{-1}}{1+R^{-1}\linv}\left(\f{1}{S}\right)_r,
\end{align*}
and
\begin{align}
\label{eq:V}
    V_t = \f{2tQ V(-2+ \linv V)}{(1-S^{-1}\linv)(1+R^{-1}\linv)}
            - \f{2t\linv S^{-1}}{1-S^{-1}\linv}\left(\f{1}{R}\right)_r
            - \f{2t \linv R^{-1}}{1+R^{-1}\linv}\left(\f{1}{S}\right)_r.
\end{align}
Furthermore, we define
\[ G:=\partial_+ R - \partial_- R, \quad H:=\partial_+ S - \partial_- S,  \]
where $\partial_\pm:= \partial_t \pm \Lambda_\pm \partial_r$.
Then
\begin{align*}
   \partial_-G &= \f{\partial_-\lp - \partial_+\lm}{\lp - \lm}G
     + (\partial_+\partial_-R - \partial_-\partial_-R), \\
   \partial_+ H &= \f{\partial_-\lp - \partial_+\lm}{\lp - \lm}H
     + (\partial_+\partial_+ S - \partial_-\partial_+ S).
\end{align*}
Here,
\[ \lp -\lm = 2t \linv \f{R+S}{(R+\linv)(S-\linv)},   \]
and by direct calculations,
\begin{align*}
   \f{\partial_-\lp - \partial_+\lm}{\lp - \lm} = \f{2}{t} + h(r,t),
\end{align*}
where
\begin{align*}
h(r,t) := & \f{t(3r-t^{2})}{(r-t^2)(2r-t^2)}
+ \f{ r^3(3R-3S+4\linv) + 2t\linv (t^5+r^2t-3rt^3)}
 {(R+\linv)(S-\linv)(r-t^2)^2\sqrt{2r-t^2}}.
\end{align*}
Note that  $h \to 0$ as $t \to 0$.
Furthermore,
\begin{align}
\label{fi}
  \partial_+\partial_-R - \partial_-\partial_-R
 = (\lp-\lm)(\partial_- R)_r
 :=tf_1(r,t)R_r+ tf_2(r,t)S_r + t^2 f_3(r,t),
\end{align}
where
\begin{align*}
  f_1(r,t)& :=\f{r^2(2R-S)}{(S-\linv)(r-t^2)(2r-t^2)}E, \\
  f_2(r,t)&:=\f{-R}{S-\linv}\Big(1+\f{R-S}{S-\linv}\Big)\f{r^2}{(r-t^2)(2r-t^2)}E, \\
  f_3(r,t)&:= \f{r R(R-S)}{(S-\linv)(r-t^2)^2 (2r-t^2)^{3/2}}
  \Big\{ \f{-3r^2t^2+rt^4+r^3}{(S-\linv)(r-t^2)}
    - \f{t(3r-2t^2)}{\sqrt{2r-t^2}}   \Big\}E,
\end{align*}
and
  \[ E(r,t) := 2r\f{\sqrt{2r-t^2}}{r-t^2}\Big(\f{1}{R+\linv}+\f{1}{S-\linv}\Big). \]
We note that functions $E, f_j$ for $j=1, 2, 3$ are all bounded in bounded regions.
Also we can obtain by direct calculations that
\begin{align}
\label{gi}
\partial_+\partial_+ S - \partial_-\partial_+ S
 :=t g_1(r,t) R_r+ t g_2(r,t) S_r + t^2 g_3(r,t),
\end{align}
where
\begin{align*}
  g_1(r,t)& := \f{-S}{R+ \linv}\Big(1+\f{S-R}{R+\linv}\Big)\f{r^2}{(r-t^2)(2r-t^2)}E, \\
  g_2(r,t)&:=\f{r^2(2S-R)}{(R+ \linv)(r-t^2)(2r-t^2)}E, \\
  g_3(r,t)&:=\f{r S(S-R)}{(R+ \linv)(r-t^2)^2 (2r-t^2)^{3/2}}
  \Big\{ \f{3r^2t^2 - rt^4 - r^3}{(R+ \linv)(r-t^2)}
    - \f{t(3r-2t^2)}{\sqrt{2r-t^2}}    \Big\}E.
\end{align*}
We note that functions $g_j$ for $j=1, 2, 3$ are all bounded in bounded regions, too.

\subsection{Regularity of Solutions near Sonic Curves}
Since we are concerned with the regularity of semi-hyperbolic patches near the sonic line, for any fixed point $(\overline{r},0)$ on the sonic line $AC$, we shall take a new point $B^{\prime}(\overline{r},t_{B^{\prime}})$, where $t_{B^{\prime}}$ is positive and small
so that $B^{\prime}$ remains in the domain $ABC$ (Figure 3). Then, through the point $B^{\prime}$ we can draw the positive and negative characteristic curves $r_{+}(B^{\prime})$ and $r_{-}(B^{\prime})$ until
they intersect the sonic line $AC$ in two points $A^{\prime}$ and $C^{\prime}$, respectively.
Let us denote the region $ABC$
by $\Omega$ and  $A^{\prime}B^{\prime}C^{\prime}$
by $\mathcal{D}$.

Since $R, S$ are uniformly bounded and negative in the domain $\Omega$ and $R=S$ on the sonic line, we can obtain
$h(r,t), f_{j}(r,t),g_{j}(r,t)(j=1,2,3)$ are also uniformly bounded in the small subdomain $\mathcal{D}$.
Let
\begin{align}
\label{k}
K_{1} &=\max\limits_{(r,t)\in \mathcal{D}}\Big\{|h(r,t)|,\, |f_{3}(r,t)|,\,
|g_{3}(r,t)|\Big\}, \\
K_{2} &=\max\limits_{(r,t)\in \mathcal{D}}\Big\{|f_{j}(r,t)|,\, |g_{j}(r,t)|,\; j=1,2\Big\},
\end{align}
and
   $K_{3}=\min\limits_{(r,t)\in \mathcal{D}} |E|$.
We can see that $K_{1}$ tends to zero and $K_{2},K_{3}$ have positive bounds as $t_{B^{\prime}}\rightarrow 0^+$. If $t_{B^{\prime}}=0$, the domain $\mathcal{D}$ degenerates into a point on the sonic line, and two constants $K_2$ and $K_3$ satisfy that $2K_2=|E(\overline{r},0)|=K_3$
with $K_3> 2K_2 \delta^{-1}$ for any $\delta\in (1,2)$. Thus, we can let
 $K_{3}> 2 \delta^{-1} K_{2} e^{2K_{1}t_{B^{\prime}}}$
and $K_{1}<K_{2}$ by taking suitable small $t_{B^{\prime}}$.

Let $\Omega(\overline{r},0)$ denote a bounded domain surrounded by a positive characteristic and a negative characteristic starting from $(\overline{r},0)$ and the characteristics $B^{\prime}C^{\prime}$ and $B^{\prime}A^{\prime}$.
For any $(r,t)\in\Omega(\overline{r},0),$ let $a$ and $b$ be intersections of the negative characteristic and the positive characteristic  through $(r,t)$ with the boundaries $B^{\prime}C^{\prime}$ and $B^{\prime}A^{\prime}$, respectively.
Let
\begin{equation*}
M_{0}=\max\Big\{\max\limits_{\Omega(\overline{r},0)}\DF{|H(t_{a})|}{K_{2}t_{a}^{2-\delta}e^{K_{1}t_{a}}}+1,  \quad\max\limits_{\Omega(\overline{r},0)}\DF{|G(t_{b})|}{K_{2}t_{b}^{2-\delta}e^{K_{1}t_{b}}}+1\Big\},
\end{equation*}
where $t_a$ and $t_b$ are $t$-coordinates of points $a$ and $b$, respectively.
 As discussed before, $M_{0}$ is well-defined and uniform
in the domain $\Omega(\overline{r},0)$, but depending on $\delta$.
\begin{figure}
\centering{\includegraphics[clip, width=15cm]{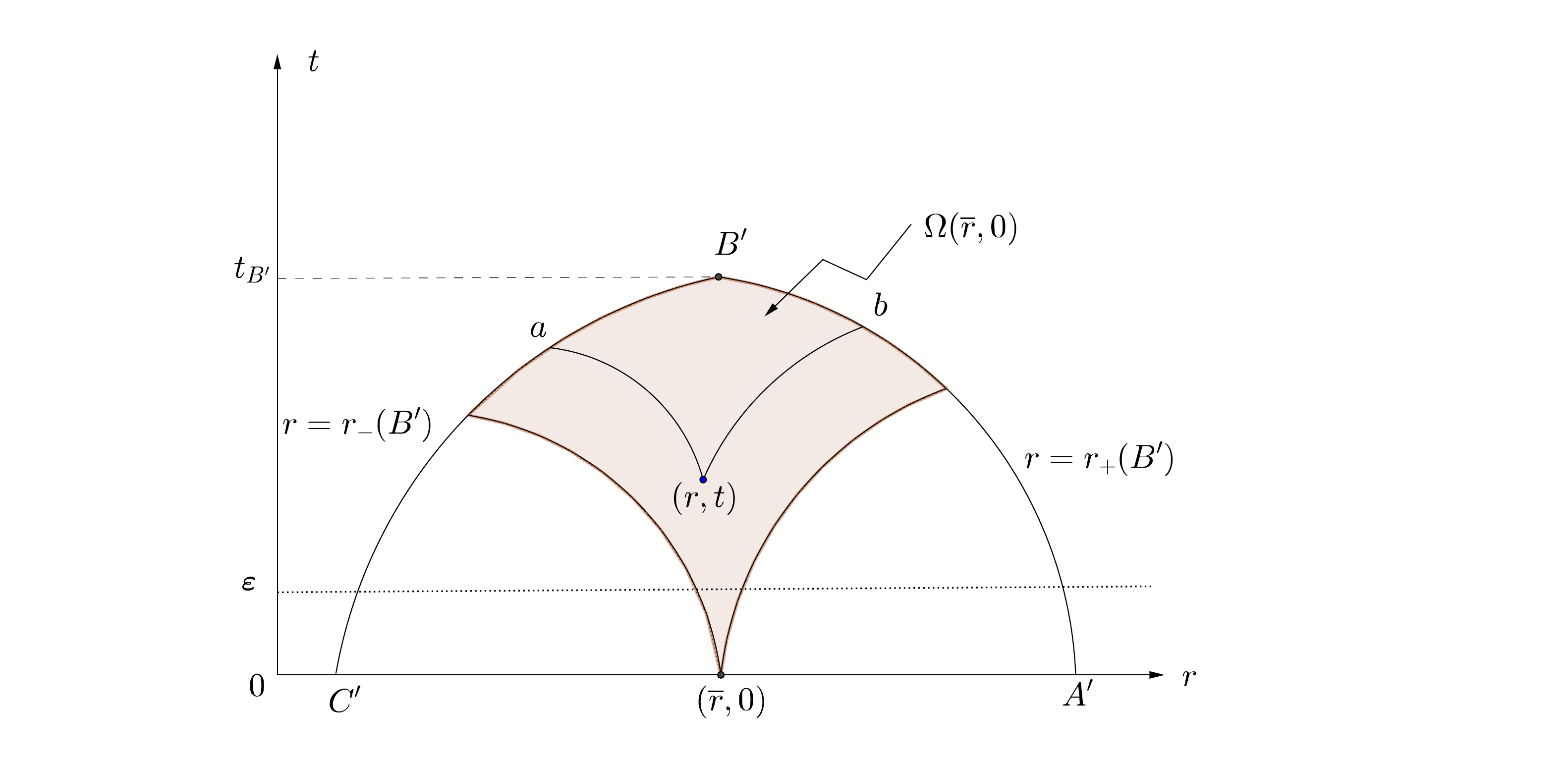}}
\parbox{14cm}{\caption { The regions of $A'B'C'$ and $\Omega(\overline{r},0)$}{\label{Fig 3}}}
\end{figure}
We can prove the following result.
\begin{lemma}
\label{lemma:3.1}
For any $(r,t)\in\Omega(\overline{r},0)$, and any $\delta\in(1,2)$, there hold
\begin{equation}
\label{eq:3.5}
 |t^{\delta}R_{r}|\leq M, \quad |t^{\delta}S_{r}|\leq M,
\end{equation}
where $M>0$ is depending on $\delta$. That is, $t^{\delta}R_{r}$ and $t^{\delta}S_{r}$ are
uniformly bounded in the domain $\Omega(\overline{r},0)$.
\end{lemma}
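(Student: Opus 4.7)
\emph{Proof proposal.} The strategy is a bootstrap argument on the pair $(G,H):=(\partial_{+}R-\partial_{-}R,\,\partial_{+}S-\partial_{-}S)$. The starting point is the clean algebraic identity $\lp-\lm=t^{2}E$, which follows by direct calculation from the definitions of $\lp,\lm,$ and $E$ in Section~2.1. Combined with the tautologies $G=(\lp-\lm)R_{r}$ and $H=(\lp-\lm)S_{r}$, this identity reduces \eqref{eq:3.5} to the pointwise bounds
\[
|G(r,t)|\le M_{0}K_{2}\,t^{2-\delta}e^{K_{1}t},\qquad |H(r,t)|\le M_{0}K_{2}\,t^{2-\delta}e^{K_{1}t}
\]
throughout $\Omega(\overline r,0)$: the lower bound $|E|\ge K_{3}$ then yields $|t^{\delta}R_{r}|,|t^{\delta}S_{r}|\le M_{0}K_{2}e^{K_{1}t_{B'}}/K_{3}=:M$, which is exactly the claim.

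I would derive those bounds by integrating the propagation equations for $G,H$ along their respective characteristics. Substituting $R_{r}=G/(t^{2}E)$ and $S_{r}=H/(t^{2}E)$ into \eqref{fi}--\eqref{gi} produces
\[
\partial_{-}G=\Bigl(\tfrac{2}{t}+h\Bigr)G+\tfrac{f_{1}G+f_{2}H}{tE}+t^{2}f_{3},\qquad \partial_{+}H=\Bigl(\tfrac{2}{t}+h\Bigr)H+\tfrac{g_{1}G+g_{2}H}{tE}+t^{2}g_{3}.
\]
Integrating the first along the negative characteristic from the boundary endpoint $a\in B'C'$ back to $(r,t)$ with integrating factor $\mu(t)=t^{-2}\exp(-\int h\,ds)$, which absorbs the leading $2/t$ term exactly, gives an explicit integral representation for $G$; the symmetric formula holds for $H$ along positive characteristics via $b\in B'A'$. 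Setting the bootstrap parameter
\[
N:=\sup_{\Omega(\overline r,0)}\max\Bigl\{\tfrac{|G|}{K_{2}t^{2-\delta}e^{K_{1}t}},\;\tfrac{|H|}{K_{2}t^{2-\delta}e^{K_{1}t}}\Bigr\}
\]
(finite, by the $C^{1}$ a priori estimate of Theorem~\ref{knowntheorem}), inserting the ansatz $|G|,|H|\le NK_{2}s^{2-\delta}e^{K_{1}s}$ into the integral formula, and evaluating the key integral $\int_{t}^{t_{a}}s^{-1-\delta}\,ds=(t^{-\delta}-t_{a}^{-\delta})/\delta$ yields the Gr\"onwall-type inequality
\[
N\le (M_{0}-1)e^{2K_{1}t_{B'}}+\frac{2K_{2}}{\delta K_{3}}e^{2K_{1}t_{B'}}\,N+O(t_{B'}).
\]

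To close the bootstrap, the structural hypothesis $K_{3}>2\delta^{-1}K_{2}e^{2K_{1}t_{B'}}$ (built into the setup preceding the lemma) forces $\theta:=\tfrac{2K_{2}}{\delta K_{3}}e^{2K_{1}t_{B'}}<1$; absorbing $\theta N$ on the left-hand side and using the smallness of $t_{B'}$ then yields $N\le M_{0}$, whence \eqref{eq:3.5} follows with the advertised $M$. The principal obstacle is the borderline scaling of the coupling term $(f_{1}G+f_{2}H)/(tE)$: it has exactly the same $1/t$ singularity as the leading coefficient $2/t$, so the integrating-factor method leaves no extra power of $t$ with which to absorb it. The only available remedy is the quantitative smallness of $K_{2}/K_{3}$ relative to $\delta/2$, which is engineered into the setup by choosing $t_{B'}$ small; this is precisely why the estimate is confined to the small near-sonic subregion $\Omega(\overline r,0)$ rather than the whole semi-hyperbolic patch.
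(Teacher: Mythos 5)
Your overall strategy is the same as the paper's: you use the identity $\lp-\lm=t^2E$ to convert the desired bound on $t^\delta R_r,t^\delta S_r$ into weighted bounds on $G$ and $H$, integrate the transport equations for $G$ and $H$ along their characteristics with the integrating factor $t^{-2}\exp(-\int h)$, evaluate the key integral $\int s^{-1-\delta}ds$, and close a contraction using precisely the structural condition $K_3>2\delta^{-1}K_2e^{2K_1t_{B'}}$ engineered by taking $t_{B'}$ small. All of these ingredients, including the constants, match the paper's proof.

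There is, however, one genuine gap: you define the bootstrap quantity $N$ as a supremum over all of $\Omega(\overline r,0)$, including the degenerate boundary $t=0$, and assert it is finite ``by the $C^1$ a priori estimate of Theorem \ref{knowntheorem}.'' That theorem gives uniform bounds on $W=(R,S,p)$ and on $\partial^\pm p$, but it does \emph{not} give uniform control of $R_r,S_r$ up to the sonic line --- if it did, $|G|=t^2|E||R_r|$ would already be $O(t^2)$ and the lemma would be immediate; the whole point of Lemma \ref{lemma:3.1} is that $R_r,S_r$ may blow up as $t\to0$ and one must show the blow-up is no worse than $t^{-\delta}$. If $N=+\infty$, your Gr\"onwall-type inequality $N\le C+\theta N$ cannot be rearranged and the argument collapses. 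The paper avoids this by truncating to $\Omega_\varepsilon=\{t\ge\varepsilon\}$, where $M_\varepsilon=\max\{|t^\delta R_r|,|t^\delta S_r|\}$ is finite by compactness, running your contraction there to show that if $M_{\varepsilon_0}>M_0$ then the strict inequalities \eqref{eq:3.7}--\eqref{eq:3.8} hold on the bottom edge $t=\varepsilon_0$, so the maximum cannot be attained there, and then decreasing $\varepsilon$ by a continuation argument until the whole domain is covered. Your proof needs this (or an equivalent regularization) inserted; with it, the rest goes through as you wrote it. A minor further remark: your assignment of the integration endpoints ($G$ from $a\in B'C'$, $H$ from $b\in B'A'$) is the opposite of the paper's usage in the body of the proof, though the paper itself is not consistent on this labeling and the point is purely notational.
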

\begin{proof}
For a fixed $\varepsilon\in(0,t_{B^{\prime}}),$ let
\begin{equation*}
\Omega_{\varepsilon}:=\Big\{(r,t)|\,\varepsilon\leq t\leq t_{B^{\prime}}, \,r_{-}(B^{\prime})\leq r \leq r_{+}(B^{\prime})\Big\}
\,\cap\,\Omega(\overline{r},0).
\end{equation*}
Defining $M_\vep=\max\limits_{\Omega_{\varepsilon}}\{|t^{\delta}R_{r}|,|t^{\delta}S_{r}|\},$ we have
\begin{equation}
\label{eq:3.4}
t^{\delta}|R_{r}|\leq M_\vep,\quad t^{\delta}|S_{r}|\leq M_\vep,
\end{equation}
in $\Omega_{\varepsilon}$ with $M_\vep$ depending on $\varepsilon$. If for any $\varepsilon\in(0,t_{B^{\prime}}),$ we have
$M_\vep\leq M_{0},$
then \eqref{eq:3.4}
hold in the whole domain $\Omega(\overline{r},0)$.
If not, then there exists  $\varepsilon_{0}\in(0,t_{B^{\prime}})$ making $M_{\vep_0} > M_{0}$.
So we let
$$M_{\vep_0}=\max\limits_{\Omega_{\varepsilon_{0}}}\{|t^{\delta}R_{r}|,|t^{\delta}S_{r}|\}.$$

For $(r_{\varepsilon_{0}},\varepsilon_{0})\in\Omega(\overline{r},0)$, we denote the two characteristics passing through
$(r_{\varepsilon_{0}},\varepsilon_{0})$ by $r_{+}^{a}$ and $r^{b}_{-}$, where $a,b$ are the  intersection points with $B^{\prime}C^{\prime}$
and $B^{\prime}A^{\prime}$ respectively. Here $a=a_{\varepsilon_{0}}$ and $b=b_{\varepsilon_{0}}$ are dependent on $\varepsilon_{0}$ and for convenience we omit the subscript
$\varepsilon_{0}$.
From the equation
\begin{align*}
   \partial_-G &= \f{\partial_-\lp - \partial_+\lm}{\lp - \lm}G
     + (\partial_+\partial_-R - \partial_-\partial_-R),
\end{align*}
 we have
\begin{align*}
\partial_{-}\Big(G(r,t) &\exp\int_{t_{b}}^{t}-\Big(\frac{2}{\tau}+h(r_{-}(\tau),\tau)\Big)d\tau\Big)\\
& =(\partial_{+}\partial_{-}R-\partial_{-}\partial_{-}R)\exp\int_{t_{b}}^{t}
-\Big(\frac{2}{\tau}+h(r_{-}(\tau),\tau)\Big)d\tau.
\end{align*}
Integrating this along the negative characteristic passing through $(r_{\varepsilon_{0}},\varepsilon_{0})$,  we get
\begin{align*}
  G(r,t)\left(\frac{t_{b}}{t}\right)^{2} & \exp\int_{t_{b}}^{t}\Big(-h(r_{-}(\tau),\tau)\Big)d\tau\\
& =G(r,t_{b})-\int_{t}^{t_{b}}(\partial_{+}\partial_{-}R-\partial_{-}\partial_{-}R)
\left(\frac{t_{b}}{\tau}\right)^{2}
\exp\int_{t_{b}}^{\tau}\Big(-h(r_{-}(s),s)ds \Big)\; d\tau.
\end{align*}
Thus, from \eqref{fi} and \eqref{k} we obtain
\begin{align*}
 |G(r,t)|(\frac{t_{b}}{t})^{2}&\exp\int_{t_{b}}^{t}\Big(-h(r_{-}(\tau),\tau)\Big)d\tau\nonumber\\
&\leq |G(r,t_{b})|+\int_{t}^{t_{b}}\left(\frac{t_{b}}{\tau}\right)^{2}\exp(K_{1}(t_{b}-\tau))
|\partial_{+}\partial_{-}R-\partial_{-}\partial_{-}R |d\tau\nonumber\\
&\leq |G(r,t_{b})|+t_{b}^{2}e^{K_{1}t_{b}}\int_{t}^{t_{b}}
\frac{1}{\tau^{2}}\Big(2K_{2}M_{\vep_0}\tau^{1-\delta}+K_{1}\tau^2\Big)d\tau\nonumber\\
&= |G(r,t_{b})|+t_{b}^{2}e^{K_{1}t_{b}}\Big[\frac{2K_{2}M_{\vep_0}}{\delta}(t^{-\delta}-t_{b}^{-\delta})
+K_{1}(t_{b}-t)\Big]\nonumber\\
&< \frac{2K_{2}M_{\vep_0}t_{b}^{2}e^{K_{1}t_{b}}}{\delta t^{\delta}},
\end{align*}
if $|G(r,t_{b})|+K_1(t_b-t)t_b^2 e^{K_1t_b}-2\delta^{-1}K_2M_{\vep_0}e^{K_1t_b}t_b^{2-\delta}<0$, which is true by
 the definition of $M_0$ and the fact of $M_{\vep_0}>M_0$.
Thus we obtain
\begin{align*}
|G(r,t)|&< t^{2-\delta}\frac{2K_{2}M_{\vep_0} e^{K_{1}t_{b}}}{\delta}\exp\int_{t_{b}}^{t}h(r_{-}(\tau ),\tau)d\tau\nonumber\\
&\leq 2 \delta^{-1} t^{2-\delta} K_{2}M_{\vep_0} e^{2K_{1}t_{b}}.
\end{align*}
On the other hand,
\begin{equation*}
G(r,t)=(\lp-\lm)R_r=2t\linv \Big(\f{1}{R+\linv}+\f{1}{S-\linv}\Big) R_{r}(r,t).
\end{equation*}
Thus we obtain
\begin{align*}
|R_{r}|(t=\vep_0)
&<\frac{1}{t^{\delta}}\cdot\frac{K_{2}M_{\vep_0}e^{2K_{1}t_{b}}}
{\delta\cdot| \f{1}{R+\linv}+\f{1}{S-\linv}|} \cdot \f{r-t^2}{r\sqrt{2r-t^2}} \nonumber\\
&\leq \frac{1}{t^{\delta}}\cdot\frac{2K_{2}M_{\vep_0}e^{2K_{1}t_{b}}}{\delta K_{3}}
\leq \frac{M_{\vep_0}}{t^{\delta}}.
\end{align*}
So we have a strict inequality such that
\begin{eqnarray}
\label{eq:3.7}
 |R_r| < M_{\vep_0}/ \varepsilon_0^\delta
\end{eqnarray}
on the line segment $t=\varepsilon_{0}$.
Similarly, integration of the equation
\[  \partial_+ H = \f{\partial_-\lp - \partial_+\lm}{\lp - \lm}H
     + (\partial_+\partial_+ S - \partial_-\partial_+ S) \]
  along the positive characteristic  makes
\begin{equation*}
 |H(r, t)|<t^{2-\delta}\frac{2K_{2}M_{\vep_0}e^{2K_{1}t_{a}}}{\delta}.
\end{equation*}
Also by the definition of $H$, we get the following estimate
\begin{equation}
\label{eq:3.8}
|S_{r}|_{t=\varepsilon_{0}}< M_{\vep_0} / \varepsilon_0^{\delta}.
\end{equation}
 According to \eqref{eq:3.7} and \eqref{eq:3.8}, $|t^\delta R_r|$ and $|t^\delta S_r|$ do not have
the maximum values on the line segment $t=\varepsilon_{0}$, that is, the maximum happens on  $\varepsilon_{0}<t\leq t_{B'}$. Hence we conclude that
this argument also holds in a larger domain $\Omega_{\varepsilon^{\prime}}$ with $\varepsilon^{\prime}<\varepsilon_{0}$.
Repeating this process, we can extend the domain larger and larger
 until it reaches the whole domain $\Omega(\overline{r},0)$. Thus, we find a constant $M$ depending on $\delta$ only and satisfying \eqref{eq:3.5}. The proof  is completed. \end{proof}

%%%%%%%%%%%%%%%%%%%%%%%%%%%%%%%%%%%%%%%%%%%%%%%%%%%%%%%%%%%%%%%%%%

Next, for any $(\overline{r},0)\in A 'C ',$ we can take a small open interval
$(\overline{r} - r_0,\overline{r} + r_0)\subset A^{\prime}C^{\prime}$.
Suppose the negative and positive characteristics
passing through points $P(\overline{r} - r_0,0)$ and
$Q(\overline{r} + r_0,0)$ intersect
the boundaries $B^{\prime}C^{\prime}$ and $B^{\prime}A^{\prime}$
at $P^{\prime}$ and $Q^{\prime}$, respectively.
Then we have the same estimates in the region of $B^{\prime}P^{\prime}PQQ^{\prime}$ in the following lemma.
The proof of it is very similar to that of the previous lemma, so let us omit the proof.

\begin{lemma}
\label{lm:3.2}
For any $\delta \in(1,2)$, there exists a constant $M$ only depending on the interval $(\overline{r}-r_0,\overline{r}+r_0)$ and $\delta$ such that
\begin{equation*}
  |t^{\delta}R_{r}|\leq M, \quad |t^{\delta}S_{r}|\leq M
\end{equation*}
hold in the domain $B^{\prime}P^{\prime}PQQ^{\prime}$.
\end{lemma}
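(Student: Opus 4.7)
The plan is to re-run the bootstrap argument of Lemma \ref{lemma:3.1} essentially verbatim, but now in the fan-shaped region $B'P'PQQ'$ whose base $PQ$ is an entire arc of the sonic line rather than the single apex $(\overline r,0)$. The point is that all quantities controlling the previous proof---namely $R$, $S$, $p$, $r$, $h$, $E$, and $f_j,g_j$ for $j=1,2,3$---are continuous on the compact closure of $B'P'PQQ' \subset \mathcal D$, and the limiting identities $h(r,0)=0$ and $2K_2 = |E(r,0)| = K_3$ continue to hold uniformly as the apex ranges over the compact interval $[\overline r-r_0,\overline r+r_0]$. Consequently, after shrinking $t_{B'}$ if necessary, the structural inequalities
\[
K_3 > 2\delta^{-1}K_2 e^{2K_1 t_{B'}},\qquad K_1<K_2,
\]
survive when the suprema/infima defining $K_1,K_2,K_3$ in \eqref{k} are taken over the whole fan $B'P'PQQ'$ instead of $\mathcal{D}$.

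With these uniform constants in hand, I would enlarge the definition of $M_0$ from just before Lemma \ref{lemma:3.1} so that the maxima are taken with $a$ ranging over the full lateral boundary $B'P'$ and $b$ ranging over the full lateral boundary $B'Q'$; since $G$ and $H$ are smooth on these characteristic boundaries (they are determined from the prescribed $C^1$ Goursat data via \eqref{fi}--\eqref{gi}), this enlarged $M_0$ is still finite. Then set
\[
\Omega_\varepsilon := \bigl\{(r,t) : \varepsilon \le t \le t_{B'}\bigr\}\cap B'P'PQQ',\qquad M_\varepsilon:=\max_{\Omega_\varepsilon}\bigl\{|t^\delta R_r|,\,|t^\delta S_r|\bigr\},
\]
and run the same contradiction scheme. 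If $M_{\varepsilon_0} > M_0$ for some $\varepsilon_0\in(0,t_{B'})$, I would integrate the $G$-equation along the negative characteristic and the $H$-equation along the positive characteristic through a supremum point, reproduce
\[
|G(r,t)| < \frac{2K_2 M_{\varepsilon_0}}{\delta}\,t^{2-\delta}e^{2K_1 t_b},\qquad |H(r,t)|<\frac{2K_2 M_{\varepsilon_0}}{\delta}\,t^{2-\delta}e^{2K_1 t_a},
\]
and convert these to the strict inequalities $|R_r|,|S_r|<M_{\varepsilon_0}/\varepsilon_0^\delta$ on $t=\varepsilon_0$ via $G=(\lp-\lm)R_r$, $H=(\lp-\lm)S_r$ and $K_3>2\delta^{-1}K_2 e^{2K_1 t_{B'}}$. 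This forces the maximum of $|t^\delta R_r|$ and $|t^\delta S_r|$ to occur strictly above $t=\varepsilon_0$, and driving $\varepsilon_0\to 0$ then yields the uniform bound with $M$ depending only on $(\overline r-r_0,\overline r+r_0)$ and $\delta$.

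The one geometric point that differs from Lemma \ref{lemma:3.1}, and which I expect to be the main thing to verify, is that both characteristics through an arbitrary interior supremum point $(r_{\varepsilon_0},\varepsilon_0)\in \Omega_{\varepsilon_0}$ genuinely reach the lateral boundaries $B'P'$ and $B'Q'$ inside $\Omega_{\varepsilon_0}$ without first escaping through the sonic base $PQ$. This is true because $\lp,\lm = O(t)$ as $t\to 0$, so characteristics traced downward from an interior point become asymptotically vertical in the $(r,t)$-plane and are therefore trapped between the outermost characteristics emanating from $P$ and $Q$ before reaching $t=0$. Once this geometric observation is recorded, the remainder of the argument is identical word-for-word to the proof of Lemma \ref{lemma:3.1} and is consequently omitted.
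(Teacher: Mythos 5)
Your proposal is correct and matches the paper's intent exactly: the paper omits the proof of this lemma precisely because it is the bootstrap of Lemma \ref{lemma:3.1} rerun over the fan $B'P'PQQ'$, with the constants $K_1,K_2,K_3$ and $M_0$ taken uniformly over the compact base interval, which is what you do. Your geometric check that characteristics from an interior point reach the lateral boundaries is the right point to flag, though the cleanest justification is simply that characteristics of the same family cannot cross $P'P$ or $QQ'$, so the upward integration paths are trapped between them and must exit through $B'P'$ and $B'Q'$.
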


Now we can show that $|V(r,t)|/t$ is uniformly bounded in the whole domain.
\begin{lemma}
\label{le3}
$|V(r,t)|/t$ is uniformly bounded in the domain $ABC$.
\end{lemma}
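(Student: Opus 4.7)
The plan is to bound $|V|/t$ by exploiting the evolution equation for $V_t$ derived above, whose principal coefficient in front of $V$ is singular like $1/t$ near the sonic curve. Writing that equation in the form
\[
V_t = A_1 V + A_2 V^2 + F,
\]
with $A_1 := -4tQ/P$, $A_2 := 2tQ\linv/P$, $P := (1-S^{-1}\linv)(1+R^{-1}\linv)$, and $F$ collecting the two $r$-derivative forcing terms, the key identity
\[
-4t^2Q \;=\; \f{2r^2}{(r-t^2)(2r-t^2)} \;=\; 1 + \f{t^2(3r-t^2)}{(r-t^2)(2r-t^2)}
\]
shows that $-4t^2Q - 1 = O(t^2)$, while $P = 1 + O(t)$ because $\linv = O(t)$ and $V$ is a priori bounded by Theorem \ref{knowntheorem}. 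Consequently $A_1 t - 1 = O(t)$ and $A_2$ remains uniformly bounded in the subdomain $\mathcal{D}$.

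Now I introduce the new unknown $W := V/t$. A direct substitution yields the companion ODE
\[
W_t \;=\; \f{A_1 t - 1}{t}\,W \;+\; A_2\, t\, W^2 \;+\; \f{F}{t},
\]
whose coefficients are well-behaved: $(A_1 t - 1)/t$ is uniformly bounded, $A_2 t$ vanishes linearly, and the forcing satisfies $|F/t| \leq CM t^{1-\delta}$ for any $\delta \in (1,2)$. This last bound is where Lemma \ref{lemma:3.1} enters: the factor $2t\linv = O(t^2)$ is present in $F$, and Lemma \ref{lemma:3.1} provides $|(1/R)_r|,\,|(1/S)_r| \leq CM t^{-\delta}$, so that $|F| \leq CMt^{2-\delta}$ and, since $1-\delta \in (-1, 0)$, the function $F/t$ is integrable on $(0, t_{B'}]$.

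Finally, I work in a local subdomain around $(\overline{r}, 0)$ as in Lemmas \ref{lemma:3.1}--\ref{lm:3.2}, so that $W$ is continuous on $(0, t_{B'}]$ and bounded at $t = t_{B'}$ by the interior smoothness of $V$. Integrating the $W$-equation backwards from $t_{B'}$ to a small $t>0$ produces a quadratic Gronwall-type inequality
\[
|W(r,t)| \;\leq\; C_0 + C_1 \!\!\int_0^{t_{B'}}\!\! |W(r,\tau)|\,d\tau + C_2 \!\!\int_0^{t_{B'}}\!\! \tau\, W(r,\tau)^2\,d\tau,
\]
in which $C_0$ absorbs $|W(r,t_{B'})|$ and the integrable forcing contribution. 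The main obstacle is the quadratic term $\tau W^2$, and I resolve it by a bootstrap: because its coefficient carries the small factor $\tau$, shrinking $t_{B'}$ if necessary makes the algebraic inequality $M \leq C_0 + C_3 t_{B'}^2 M^2$ admit only a small bounded root, and a continuity argument initiated at $t = t_{B'}$ confines $W$ to that root all the way down to $t \to 0^+$. Combined with the trivial boundedness of $V/t$ on any compact subset of $\{t>0\} \cap ABC$ and a finite covering of the sonic curve $AC$, this yields the uniform bound asserted.
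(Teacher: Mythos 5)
Your proof is correct and follows essentially the same route as the paper's: the same change of unknown $W=V/t$, the same identity $-4t^2Q=1+O(t^2)$ showing that the coefficient of $V$ is $t^{-1}(1+O(t))$, the same use of Lemma \ref{lemma:3.1} to bound the forcing by $O(t^{1-\delta})$ (integrable for $\delta\in(1,2)$), and the same integration from $t$ up to $t_{B'}$. The one place you diverge is the quadratic term $2tQ\linv V^2/P$: you keep it as a genuine nonlinearity and close the estimate with a quadratic Gronwall inequality plus a continuity/bootstrap argument that requires shrinking $t_{B'}$ and a finite covering of $AC$. The paper avoids this entirely by observing that $V=1/S-1/R$ is already uniformly bounded (since $R,S$ are negative, uniformly bounded, and bounded away from zero), so $\linv V=O(t)$ and the quadratic term can be absorbed into the linear coefficient, writing $V_t=l_1(r,t)\f{V}{t}+l_2(r,t)t^{2-\delta}$ with $l_1=\f{r^2(2-\linv V)}{(r-t^2)(2r-t^2)P}=1+O(t)$; the equation for $W$ is then linear with bounded coefficient $(l_1-1)/t$, and a single integration gives the bound with no extra smallness condition on $t_{B'}$. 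Your version also works, but two details should be tightened: the integrals in your Gronwall inequality should run over $[t,t_{B'}]$ rather than $[0,t_{B'}]$, and you should verify that the smallness condition on $t_{B'}$ is compatible with the fact that $C_0$ contains $\max|V|/t_{B'}$, which a priori grows as $t_{B'}\to0$ (it is compatible, since the relevant product $C_0C_3t_{B'}^2\lesssim t_{B'}\to0$, but this deserves a line).
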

\begin{proof}
Let $(\overline{r},0)\in AC$. From \eqref{eq:V} we have
\begin{align}
\label{eq:Vl}
    V_t &= \f{2tQ V(-2+ \linv V)}{(1-S^{-1}\linv)(1+R^{-1}\linv)}
            - \f{2t\linv S^{-1}}{1-S^{-1}\linv}\left(\f{1}{R}\right)_r
            - \f{2t \linv R^{-1}}{1+R^{-1}\linv}\left(\f{1}{S}\right)_r \nonumber \\
        &= \f{r^2(2-\linv V)}{(r-t^2)(2r-t^2)(1-S^{-1}\linv)(1+R^{-1}\linv)}\cdot \f{V}{t} \nonumber \\
        & \qquad\quad + t^{2-\delta} \f{2r\sqrt{2r-t^2}}{(r-t^2)RS}\Big[\f{t^\delta R_r}{(1-S^{-1}\linv)R}
        +   \f{t^\delta S_r}{(1+R^{-1}\linv)S}  \Big] \nonumber \\
&=:l_{1}(r,t)\frac{V}{t}+l_{2}(r,t)t^{2-\delta}.
\end{align}
It is obvious that $\lim\limits_{t\rightarrow 0^+}l_{1}(r,t)=1$, and
$l_{2}(r,t)$ is bounded in the domain $\mathcal{D}$.  Moreover, we note that
\begin{equation*}
\f{l_{1}(r,t)-1}{t}= \f{3rt^2-t^4+ r^2\linv V  -3rt^2\linv V +\linv t^4 V +
      (2r^2 -3rt^2 +t^4)\lambda^{-2}R^{-1}S^{-1}}{t(1-S^{-1}\linv)(1+R^{-1}\linv)(r-t^2)(2r-t^2)}
\end{equation*}
also tends to zero as $t\rightarrow 0^+$ since $V \to 0$ as $t \to 0^+$. Let
$$K_{4}=\max_{\mathcal{D}}\Big|\frac{l_{1}(r,t)-1}{t}\Big|, \quad
K_{5}=\max_{\mathcal{D}}\; |l_{2}(r,t)|.$$

\noindent
Now we rewrite \eqref{eq:Vl} in the form of
\begin{equation*}
\partial_{t}\Big(V \exp\Big(\int_{t}^{t_{B^{\prime}}}\f{l_{1}(r,\tau)}{\tau}d\tau\Big)\Big)
= l_{2}(r,t)t^{2-\delta} \exp\Big(\int_{t}^{t_{B^{\prime}}}\f{l_{1}(r,\tau)}{\tau}d\tau\Big),
\end{equation*}
that is,
\begin{align}
\label{eq:V-diff}
 \partial_{t}\Big(\f{V}{t}\exp\Big(\int_{t}^{t_{B^{\prime}}}\f{l_{1}(r,\tau)-1}{\tau}d\tau\Big)\Big)
=l_{2}(r,t)t^{1-\delta}\exp\Big(\int_{t}^{t_{B^{\prime}}}\f{l_{1}(r,\tau)-1}{\tau}d\tau\Big).
\end{align}
Integration of the above equation from $t$ to $t_{B^{\prime}}$ yields that
\begin{align*}
\frac{V}{t}(r, t_{B^{\prime}})-\frac{V}{t}(r, t)\exp\Big(\int_{t}^{t_{B^{\prime}}}\f{l_{1}(r,\tau)-1}{\tau}\Big)
=\int_{t}^{t_{B^{\prime}}} l_{2}(r,\tau)\tau^{1-\delta}
\exp\Big(\int_{\tau}^{t_{B^{\prime}}}\f{l_{1}(r,s)-1}{s}ds\Big) \; d\tau.
\end{align*}
Thus
\begin{align*}
 e^{-K_{4}t_{B^{\prime}}}\Big|\frac{V}{t}(r,t)\Big|
& < \Big|\frac{V}{t}(r, t)\Big| \exp\Big(\int_{t}^{t_{B^{\prime}}}\frac{l_{1}(r,\tau)-1}{\tau}d\tau\Big)\\
&\leq \Big|\frac{V}{t}\Big|(r,t_{B^{\prime}})
+\int_{t}^{t_{B^{\prime}}}|l_{2}(r,\tau)\tau^{1-\delta}|
\exp\Big(\int_{\tau}^{t_{B^{\prime}}}\Big|\frac{l_{1}(r,s)-1}{s}\Big|ds\Big) \; d\tau\\
&\leq  \Big|\frac{V}{t}\Big|(r, t_{B^{\prime}})+e^{K_{4}t_{B^{\prime}}}
\f{K_{5}t_{B^{\prime}}^{2-\delta}}{2-\delta},
\end{align*}
that is,
\begin{equation}
\label{eq:3.17}
 |\frac{V}{t}(r, t)|<e^{K_{4}t_{B^{\prime}}}\left\{\max_{(r, t_{B^{\prime}})\in ABC}|\frac{V}{t}|(r, t_{B^{\prime}})+
 e^{K_{4}t_{B^{\prime}}}\f{K_{5}t_{B^{\prime}}^{2-\delta}}{2-\delta}\right\}:=\widehat{M}.
\end{equation}
Because of the arbitrariness of $t$ and the continuity of $V$, we obtain that $|V(r,t)|\leq \widehat{M}t$ holds for any $t\in[0,t_{B^{\prime}}]$.
\end{proof}
Thus we see that the two derivatives $\partial^+p$ and    $\partial^-p$ approach a common value on the sonic curve with a rate of
   $O(\sqrt{r+p})$.
Furthermore, we have the following result.
\begin{lemma}
 $R, S$, and $V(r, t)/t$ is uniformly continuous in the domain $ABC$ including the sonic line.
\end{lemma}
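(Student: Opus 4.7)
The strategy is to show that $R$, $S$, and $V/t$ admit continuous extensions to the closed region $\overline{ABC}$; since $\overline{ABC}$ is compact, uniform continuity then follows automatically. Everything rests on the a priori bounds $|V|/t\le \widehat M$ from Lemma~\ref{le3} and $|t^{\delta}R_r|,|t^{\delta}S_r|\le M$ for some fixed $\delta\in(1,2)$ from Lemmas~\ref{lemma:3.1}--\ref{lm:3.2}.

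First I would establish uniform bounds on the directional derivatives. Using $V=1/S-1/R$ one has $S-R=-RSV$, so the first equation of \eqref{eq:RS} rewrites as
\[
\partial_- R \;=\; \frac{r^2 R^2 S}{(S-\linv)(r-t^2)(2r-t^2)}\cdot\frac{V}{t},
\]
uniformly bounded by Lemma~\ref{le3} and the strict negativity of $R,S$ from Theorem~\ref{knowntheorem}; a symmetric computation gives $|\partial_+ S|\le C$. The cross estimates $|\partial_+ R|,|\partial_- S|\le C$ then follow at once from $G=\partial_+ R-\partial_- R$ and $H=\partial_+ S-\partial_- S$, together with the pointwise bound $|G|,|H|\le C\,t^{2-\delta}$ that is produced inside the proof of Lemma~\ref{lemma:3.1}.

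Next, decomposing $R_t = \partial_- R - \Lambda_- R_r$ and using the degeneracy $|\Lambda_-|\le C t^2$ (which follows from $\linv=O(t)$ as $t\to 0$) together with $|R_r|\le M/t^{\delta}$ yields $|\Lambda_- R_r|\le C\,t^{2-\delta}$, bounded on $\overline{ABC}$ since $2-\delta>0$. Hence $|R_t|\le C$ and similarly $|S_t|\le C$. Consequently $R(r,\cdot)$ is Lipschitz in $t$, so the limit $R(r,0^+):=\lim_{t\to 0^+} R(r,t)$ exists for every $r$. To see that $r\mapsto R(r,0)$ is continuous, I would combine the vertical bound $|R(r,0)-R(r,t)|\le Ct$ with the interior bound $|R(r_1,t)-R(r_2,t)|\le (M/t^{\delta})|r_1-r_2|$ and optimize in $t$ by choosing $t=|r_1-r_2|^{1/(1+\delta)}$; this produces H\"older continuity with exponent $1/(1+\delta)\in(1/3,1/2)$. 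A three-term splitting then upgrades this to continuity of $R$ at every sonic point, and compactness of $\overline{ABC}$ promotes continuity to uniform continuity. The argument for $S$ is identical.

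For $V/t$, the integrating-factor form \eqref{eq:V-diff} shows that on each vertical line $r=\textrm{const}$ the right-hand side $l_2\,t^{1-\delta}$ is integrable (since $\delta<2$) and the integrating factor is bounded (because $(l_1-1)/t$ is bounded, cf.\ the constant $K_4$ appearing inside Lemma~\ref{le3}). Thus $\lim_{t\to 0^+} V(r,t)/t$ exists for every $r$; direct differentiation of $V=(R-S)/(RS)$ combined with Lemma~\ref{lemma:3.1} gives $|(V/t)_r|\le CM/t^{\delta+1}$, and balancing the vertical rate $t^{2-\delta}$ against this horizontal rate via $t=|r_1-r_2|^{1/3}$ yields H\"older continuity of the boundary limit with exponent $(2-\delta)/3>0$, so continuous extension plus compactness delivers uniform continuity of $V/t$ on $\overline{ABC}$. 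The main obstacle throughout is the degeneracy of the $r$-derivatives near the sonic line: $R_r,S_r$ blow up like $t^{-\delta}$, so naive horizontal integration is impossible, and the trade-off between the clean Lipschitz bound in $t$ and the singular-but-pointwise-controlled bound in $r$ must be executed by optimizing in $t$; the resulting H\"older exponents must remain strictly positive for some $\delta\in(1,2)$, which is precisely the range opened up by Lemmas~\ref{lemma:3.1}--\ref{lm:3.2}.
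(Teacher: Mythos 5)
Your proposal is correct, but it takes a genuinely different route from the paper's. For $R$ and $S$, the paper never forms the vertical decomposition $R_t=\partial_-R+\tfrac{2t\linv}{S-\linv}R_r$; instead it picks two sonic points $P_1(r_1,0)$, $P_2(r_2,0)$, runs the negative characteristic from $P_1$ and the positive characteristic from $P_2$ to their intersection $Q(r_b,t_b)$, integrates the bounded characteristic derivatives \eqref{eq:partialRS} along these arcs, and closes the triangle inequality using $R=S$ on $AC$ and the smallness of $|R-S|$ at $Q$. That argument stays entirely inside the characteristic triangle and yields only a qualitative modulus $w(t_b)\to 0$. Your version instead trades the clean $O(t)$ vertical bound against the singular horizontal bound $M t^{-\delta}|r_1-r_2|$ and optimizes in $t$, which buys an explicit H\"older exponent $1/(1+\delta)$ for the boundary traces --- strictly more than the paper claims --- at the modest cost of having to note that the short vertical and horizontal segments remain in the domain (harmless, since $\Lambda_\pm=O(t^2)$ makes the lateral characteristic boundaries nearly vertical in the $(r,t)$-plane, and one should invoke the finite covering of $AC$ by the neighborhoods of Lemma~\ref{lm:3.2} to make the constant $M$ uniform, as the paper implicitly does). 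For $V/t$ the two arguments share the boundary estimate $|V/t(r,t_b)-V/t(r,0)|\le K_0t_b^{2-\delta}$ from \eqref{eq:V-diff}; the paper then simply uses interior continuity of $V/t$ at the fixed height $t_b$, whereas you quantify the horizontal variation by $|(V/t)_r|\le CMt^{-\delta-1}$ and optimize to get the H\"older exponent $(2-\delta)/3$. Both closings are valid; yours is quantitatively stronger, the paper's is softer and needs no control on $V_r$.
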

\begin{proof}
 Let us choose two sonic points $P_1(r_1,0), P_2(r_2,0) \in AC$.
 Then we draw a positive characteristic $\gamma_+$ passing through $P_2$ and a negative characteristic $\gamma_-$ passing through $P_1$. Let
  $Q(r_b,t_b) \in \Omega$ be an intersection point of $\gamma_+$ and $\gamma_-$.  We note that $Q$ approaches a sonic point as $|r_2-r_1| \to 0$. From (\ref{eq:RS}), we see that
 \begin{equation}
 \label{eq:partialRS}
   \partial_-R =\f{2t^2}{S-\linv}QR \cdot \f{S-R}{t}, \quad
    \partial_+S =\f{2t^2}{R+ \linv}QS \cdot \f{R-S}{t}.
 \end{equation}
Since $V(r,t)/t$ is uniformly bounded, we have
 $|\partial_-R |,\; |\partial_+ S| \leq L$ for some constant $L$.

 Now integrating the first equation of (\ref{eq:partialRS}) along $\gamma_-$ from $P_1$ to
 $Q$ gives
 \[  |R(r_b,t_b) - R(r_1, 0)| \leq \int^{t_b}_0  \Big| \f{2t^2QR}{S-\linv}\cdot \f{S-R}{t}\Big|dt
    \leq Lt_b. \]
Similary the second equation of (\ref{eq:partialRS}) along $\gamma_+$ from $P_2$ to
 $Q$ gives $$|S(r_b,t_b)-S(r_2,0)| \leq L t_b. $$
Moreover, since $R=S$ on $AC$ and $R, S$ are continuous on $\Omega$,
$R(r_j,0)=S(r_j,0)$ for $j=1,2$, and $|R(r_b,t_b)-S(r_2,0)| \to 0$ as $t_b \to 0$.
Then
\begin{align*}
    |R(r_1,0)-R(r_2,0)| & \leq |R(r_1, 0)- R(r_b,t_b)| + |S(r_2, 0)-S(r_b,t_b)|
                           + |R(r_b,t_b)-S(r_b,t_b)|  \\
                        & \leq w(t_b),
\end{align*}
where $w(t_b):=2Lt_b + |R(r_b,t_b)-S(r_b,t_b)|$.
 Similarly,
 $|S(r_1,0)-S(r_2,0)| \leq w(t_b)$.
Since $w(t_b) \to 0$ as $|r_2 - r_1| \to 0$,
which implies that $R$ and $S$ are both continuous on the sonic line $AC$,
we prove that $R$ and $S$ are both uniformly continuous in the whole domain $ABC$.

Let $\vep>0$ be given and $t_b \in (0, t_{B'})$ with $t_{B'} \ll 1$ will be determined later.
 From (\ref{eq:V-diff}), we have
 \begin{align*}
 \partial_{t}\Big(\DF{V}{t}\exp\Big(\int_{t}^{t_{b}}\DF{l_{1}(r,\tau)-1}{\tau}d\tau\Big)\Big)
=l_{2}(r,t)t^{1-\delta}\exp\Big(\int_{t}^{t_b}\DF{l_{1}(r,\tau)-1}{\tau}d\tau\Big),
\end{align*}
 which yields
 \begin{align*}
    \f{V}{t}(r, t_b) - \f{V}{t}(r,0)\exp\Big(\int^{t_b}_0 \f{l_1 -1}{\tau} d\tau \Big)
    = \int^{t_b}_0 l_{2}(r,\tau) \tau^{1-\delta}\exp\Big(\int_{\tau}^{t_b}\DF{l_{1}(r,s)-1}{s}ds\Big)d\tau.
 \end{align*}
Then
\begin{align*}
   \Big| \f{V}{t}(r, t_b) -\f{V}{t}(r,0) \Big| & \leq
   \Big| \f{V}{t}(r,0) \Big|  \Big( \exp\int^{t_b}_0 \Big| \f{l_1 - 1}{\tau} \Big| d\tau -1 \Big)
    + \int^{t_b}_0 |l_2 \tau^{1-\alpha}| \exp \Big( \int^{t_b}_\tau \f{l_1 - 1}{s}ds \Big) \\
     & \leq  2 \widehat{M} K_4 t_b + \f{K_5 t_b^{2-\delta} e^{K_4 t_b}}{2-\delta} \\
     & \leq K_0 t^{2-\delta}_b
\end{align*}
 for some constant $K_0$. Now we take sufficiently small $t_b$ so that $K_0 t^{2-\delta}_b < \vep/4$.
 For this fixed $t_b$, since $V/t$ is continuous inside $\Omega$,  we can take $d>0$ such that, if $|r_2 - r_1|\leq d$, then we have
 \[ \Big| \f{V}{t}(r_1, t_b) - \f{V}{t}(r_2, t_b) \Big| < \f{\vep}{4}.  \]
Thus for any $|r_1-r_2|\leq d$, we have
 \begin{align*}
    \Big| \f{V}{t}(r_1, 0) - \f{V}{t}(r_2, 0) \Big| & \leq
     \Big| \f{V}{t}(r_1, 0) - \f{V}{t}(r_1, t_b) \Big|
       + \Big| \f{V}{t}(r_1, t_b) - \f{V}{t}(r_2, t_b) \Big| \\
           & \qquad \qquad  + \Big| \f{V}{t}(r_2, t_b) - \f{V}{t}(r_2, 0) \Big|   \\
           &  <   2 K_0 t^{2-\delta}_b   + \f{\vep}{4} < \vep.
 \end{align*}
 Therefore, $V/t$ is uniformly continuous in the whole domain.
\end{proof}

{\bf Proof of the Main Theorem: }
We consider the sonic curve as the limit of the level curves $\ta=\ta_\vep(r)$ in the $(r,\ta)$ plane:
\[r+p(r,\ta)=\vep.\]
In the semi-hyperbolic regions, $R, S$ are both strictly negative and uniformly bounded, and thus
\[p_\ta=\frac{R+S}{2}\]
is negative and bounded. Therefore, each $\ta_\vep^{\prime}(r)=-(1+p_r)/p_\ta$ is well defined. Next we have
\[
 p_{r}=\frac{R-S}{2\lambda^{-1}}=\frac{R-S}{t}\cdot\frac{r-t^{2}}{2r\sqrt{2r-t^2}}.
 \]
According to Lemma \ref{le3}, $p_{r}$ is uniformly bounded. Therefore $|\ta_\vep^{\prime}(r)|$ is  also uniformly bounded in the whole domain, including the sonic line $AC$. Moreover, since $p_r$ and $p_\ta$ are uniformly continuous
in the whole domain, $\ta_\vep^{\prime}(r)$ is uniformly continuous as well. Hence $\ta'(r)$ is continuous on the sonic line. That is, the sonic line
is $C^1$ continuous.

%%%%%%%%%%%%%%%%%%%%%%%%%%%%%%%%%%%%%%%%%%%%%%%
\subsection*{Acknowledgements}
Qin Wang's research was supported in part
by Chinese National Natural Science Foundation under grant 11401517, and
Yunnan University Foundation 2013CG021 and W4030002.
\bigskip

\end{document}